\newtheorem{theorem}{Theorem}[section]
\begin{document}

\title{Stochastic SICA Epidemic Model with Jump L\'{e}vy Processes\thanks{This is a 
preprint whose final form is published by Elsevier in the book 
'Mathematical Analysis of Infectious Diseases', 1st Edition -- June 1, 2022, 
ISBN: 9780323905046.}}

\author{Houssine Zine$^a$\\
\texttt{zinehoussine@ua.pt}
\and
Jaouad Danane$^b$\\
\texttt{jaouaddanane@gmail.com}
\and
Delfim F. M. Torres$^a$\thanks{Corresponding author: delfim@ua.pt}\\ 
\texttt{delfim@ua.pt}}


\date{$^a$\mbox{Center for Research and Development in Mathematics and Applications (CIDMA)},
Department of Mathematics, University of Aveiro, 3810-193 Aveiro, Portugal\\[0.3cm]
$^b$Laboratory of Systems Modelization and Analysis for Decision Support,\\ 
National School of Applied Sciences, Hassan First University, Berrechid, Morocco}

\maketitle


\begin{abstract}
We propose and study a shifted SICA epidemic model, extending the one of Silva and Torres (2017)
to the stochastic setting driven by both Brownian motion processes and jump L\'evy noise.
L\'evy noise perturbations are usually ignored by existing works of mathematical modelling 
in epidemiology, but its incorporation into the SICA epidemic model is worth to consider 
because of the presence of strong fluctuations in HIV/AIDS dynamics, often leading 
to the emergence of a number of discontinuities in the processes under investigation. 
Our work is organised as follows: (i) we begin by presenting our model, by clearly justifying  
its used form, namely the component related to the L\'evy noise; (ii) we prove existence and
uniqueness of a global positive solution by constructing a suitable stopping time; 
(iii) under some assumptions, we show extinction of HIV/AIDS; (iv) we obtain sufficient 
conditions assuring persistence of HIV/AIDS; (v) we illustrate our mathematical results
through numerical simulations.

\bigskip

{\bf Keywords:} the SICA epidemic model; jump L\'evy processes; 
stochastic differential equations; Brownian motion; 
extinction and persistence.
\end{abstract}


\section{Introduction}

Human immunodeficiency virus (HIV) is known as a pathogen causing
the acquired immunodeficiency syndrome (AIDS), which is the
end-stage of the infection. After that, the immune system fails to
play its life-sustaining role \cite{bla,wei}.
On the other hand, according to the World Health Organization \cite{who}, 
$36.7$ million people live with HIV, $1.8$ million people become newly 
infected with HIV, and more than $1$ million individuals die annually. Based 
on these alarming statistics, HIV becomes a major global public health issue.
Mathematical modelling of HIV viral dynamics is a powerful tool for predicting 
the evolution of this disease \cite{Allali,smith,kor,now,kuang}.

On the other hand, stochastic quantification of several real life phenomena 
has been much helpful in understanding the random nature of their incidence 
or occurrence. This also helps in finding solutions to such problems, 
arising either in form of minimization of their undesirability 
or maximization of their rewards. Besides, the infectious diseases are exposed 
to randomness and uncertainty in terms of normal infection progress. 
Therefore, stochastic modelling is more appropriate comparing 
to deterministic, in particular considering the fact that 
stochastic systems do not only take into account the variable mean 
but also the standard deviation behaviour surrounding it. Moreover, 
the deterministic systems generate similar results for similar
initial fixed values, while the stochastic ones 
can give different predicted results. 
Several stochastic infectious models, describing the effect 
of white noise on viral dynamics, have been published
\cite{R2,Mahrouf2016,Pitchaimani2018,Rajaji2017,SIQR2019}.

In this paper, based on \cite{Silva16}, we propose and analyse 
a mathematical model for the transmission dynamics of HIV and AIDS. 
Our aim is to show the effect of the L\'{e}vy jump in the dynamics 
of the population. The L\'{e}vy noise is used to describe the 
contingency and the outburst. Precisely, we propose the 
following stochastic model driven jointly by white 
and L\'{e}vy noises:
\begin{equation}
\label{sy1}
\begin{cases}
dS(t)=\left( \Lambda-\beta I(t)S(t)-\mu S(t)\right)dt-\sigma I(t)S(t)dW_t
-\displaystyle \int_UJ(u)I(t-)S(t-)\check{N}(dt,du),\\
dI(t)=\left(\beta I(t)S(t)-(\rho+\phi+\mu)I(t)+\alpha A(t)
+\omega C(t)\right)dt +\sigma I(t)S(t)dW_t\\
\qquad\qquad +\displaystyle \int_UJ(u)I(t-)S(t-)\check{N}(dt,du),\\
dC(t)=\left( \phi I(t)-(\omega +\mu)C(t)\right)dt,\\
dA(t)=\left( \rho I(t)-(\alpha+\mu+d)A(t)\right)dt,
\end{cases}
\end{equation}
where $W_t$ is a standard Brownian motion with intensity $\sigma$ 
defined on a complete filtered probability space 
$\left( \Omega,\mathcal{F},(\mathcal{F}_t)_{t\geq 0},\mathbb{P}\right)$ 
with filtration $(\mathcal{F}_t)_{t\geq 0}$ satisfying the usual conditions; 
$S(t-)$ and $I(t-)$ denote the left limits of $S(t)$ and $I(t)$, respectively; 
$N(dt,du)$ is a Poisson counting measure with the stationary compensator 
$\nu(du)dt$ and $\tilde{N}(dt,du)=N(dt,du)-\nu(du)dt$, where $\nu$ 
is defined on a measurable subset $U$ of the non-negative half-line,  
with $\nu(U)< \infty$; and $J(u)$ represents the jumps intensity. 
Here, $S$ denotes the susceptible individuals; 
$I$ the HIV-infected individuals with no clinical symptoms of AIDS 
(the virus is living or developing in the individuals but 
without producing symptoms or only mild ones) 
but able to transmit HIV to other individuals; 
$C$ the HIV-infected individuals under ART treatment 
(the so called chronic stage) with a viral load remaining low; 
and $A$ the HIV-infected individuals with AIDS clinical symptoms. 
The meaning of the parameters of the SICA model \eqref{sy1} 
are given in Table~\ref{tab1}. 

\begin{table}
\caption{Parameters of the suggested stochastic 
SICA model \eqref{sy1} and their meaning.}
\centering
\begin{tabular}{|c | c |} \hline
Parameters &    Meaning  \\ \hline \hline
$\Lambda$ &  Recruitment rate\\ \hline
$\mu$  &  Natural death rate \\ \hline
$\beta$ & The transmission rate\\ \hline
$\phi$ & HIV  treatment rate for $I$ individuals\\ \hline
$\rho$ & Default treatment rate for $I$ individuals\\ \hline
$\alpha$ & AIDS treatment rate\\ \hline
$\omega$ & Default treatment rate for $C$ individuals\\ \hline
$d$ & AIDS induced death rate\\ \hline
\end{tabular}
\label{tab1}
\end{table}


\section{Existence and uniqueness of a global positive solution}
\label{sec2}

Let us define 
$$
\Omega :=\left\{(S;I;C;A)\in \mathbb{R}_+^4 : \frac{\Lambda}{\mu+d}
\leq S+I+C+A\leq \frac{\Lambda}{\mu}\right\}.
$$
Along the text, we assume that the following hypothesis 
on the jumps intensity $J(u)$ holds:
\begin{itemize}
\item[$(H)$] $J$ is a bounded function and 
$0<J(u)\leq \displaystyle \frac{\mu}{\Lambda}$, $u\in U$.
\end{itemize}
Moreover, we abbreviate ``almost surely'' as $a.s.$

We begin by proving existence of a unique global positive solution  
of system \eqref{sy1} for any given initial data in $\Omega$.

\begin{theorem}
If the given initial data $(S(0);I(0);C(0);A(0))$ belongs to $\Omega$,
then there exists $a.s.$ a unique global positive solution 
$(S(t);I(t);C(t);A(t))$ of system \eqref{sy1} in $\Omega$ 
for every $t\geq 0$. Moreover,
\begin{gather*}
\limsup_{t\rightarrow \infty} S(t)
\leq \dfrac{\Lambda}{\mu} \text{ a.s.}, 
\qquad\qquad \liminf_{t\rightarrow \infty} S(t)
\geq \dfrac{\Lambda}{\mu+d} \text{ a.s.},\\
\limsup_{t\rightarrow \infty} I(t)
\leq \dfrac{\Lambda}{\mu} \text{ a.s.}, 
\qquad\qquad \liminf_{t\rightarrow \infty} I(t)
\geq \dfrac{\Lambda}{\mu+d} \text{ a.s.},\\
\limsup_{t\rightarrow \infty} C(t)
\leq \dfrac{\Lambda}{\mu} \text{ a.s.}, 
\qquad\qquad \liminf_{t\rightarrow \infty} C(t)
\geq \dfrac{\Lambda}{\mu+d} \text{ a.s.},\\
\limsup_{t\rightarrow \infty} A(t)
\leq \dfrac{\Lambda}{\mu} \text{ a.s.}, 
\qquad\qquad \liminf_{t\rightarrow \infty} A(t)
\geq \dfrac{\Lambda}{\mu+d} \text{ a.s.}
\end{gather*} 
\end{theorem}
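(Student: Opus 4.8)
The plan is to follow the classical Mao-type scheme for stochastic differential systems, adapted to the presence of the L\'evy jump term. \textbf{Local solution.} The drift, diffusion and jump coefficients of \eqref{sy1} are locally Lipschitz and of linear growth on $\mathbb{R}_+^4$, so the standard existence and uniqueness theory for It\^o equations driven by a Brownian motion and a compensated Poisson random measure provides a unique solution $(S;I;C;A)$ on a maximal interval $[0,\tau_e)$, where $\tau_e$ is the explosion time. \textbf{Confinement to $\Omega$.} Put $N(t)=S(t)+I(t)+C(t)+A(t)$. Summing the four equations of \eqref{sy1}, the Brownian contributions $\mp\sigma I S\,dW_t$ and the jump contributions $\mp\int_U J(u)I(t-)S(t-)\check N(dt,du)$ cancel, leaving the pathwise linear ODE
\[
\dot N(t)=\Lambda-\mu N(t)-d\,A(t).
\]
Since $0\le A(t)\le N(t)$ we get $\Lambda-(\mu+d)N(t)\le \dot N(t)\le \Lambda-\mu N(t)$, and a comparison with the solutions of $\dot y=\Lambda-\mu y$ and $\dot z=\Lambda-(\mu+d)z$ shows that $N(0)\in[\Lambda/(\mu+d),\Lambda/\mu]$ forces $N(t)\in[\Lambda/(\mu+d),\Lambda/\mu]$ on $[0,\tau_e)$; in particular each component lies in $[0,\Lambda/\mu]$ there.

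\textbf{Positivity and non-explosion.} Fix $k_0$ with $1/k_0<\min\{S(0),I(0),C(0),A(0)\}$ and, for integer $k\ge k_0$, set
\[
\tau_k=\inf\bigl\{t\in[0,\tau_e):\ \min\{S(t),I(t),C(t),A(t)\}\le 1/k\bigr\},
\]
so that $\tau_k\uparrow\tau_\infty\le\tau_e$; it suffices to prove $\tau_\infty=\infty$ a.s. Introduce the $C^2$ function $V(S,I,C,A)=4\ln(\Lambda/\mu)-\ln S-\ln I-\ln C-\ln A$, which is nonnegative on $\Omega$ by the previous step. Applying the It\^o formula for jump--diffusions, and using the bounds just obtained together with hypothesis $(H)$ — which guarantees $0<1-J(u)I(t-)<1$, hence that $-\ln(1-J(u)I(t-))$ is bounded and, since $\nu(U)<\infty$, $\nu$-integrable — one obtains a deterministic constant $K>0$ with $\mathcal{L}V\le K$ on $\Omega$. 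Therefore $\mathbb{E}\,V(S(t\wedge\tau_k),I(t\wedge\tau_k),C(t\wedge\tau_k),A(t\wedge\tau_k))\le V(S(0),I(0),C(0),A(0))+Kt$. If $\mathbb{P}(\tau_\infty<\infty)>0$, there are $T,\varepsilon>0$ with $\mathbb{P}(\tau_k\le T)\ge\varepsilon$ for all $k\ge k_0$; but on $\{\tau_k\le T\}$ some component equals $1/k$, so $V\ge\ln k+\ln(\Lambda/\mu)$ there, whence $V(S(0),I(0),C(0),A(0))+KT\ge\varepsilon(\ln k+\ln(\Lambda/\mu))\to\infty$, a contradiction. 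Thus $\tau_\infty=\infty$ a.s., which together with the upper bounds above rules out explosion, so $\tau_e=\infty$ a.s. and the solution stays in $\Omega$.

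\textbf{Asymptotic bounds and the main difficulty.} The displayed $\limsup$/$\liminf$ estimates follow from the ODE for $N$: comparison of $\dot N(t)=\Lambda-\mu N(t)-d A(t)$ with $\dot y=\Lambda-\mu y$ and $\dot z=\Lambda-(\mu+d)z$ gives $\limsup_{t\to\infty}N(t)\le\Lambda/\mu$ and $\liminf_{t\to\infty}N(t)\ge\Lambda/(\mu+d)$, and combining this with positivity, with $x(t)\le N(t)$ for $x\in\{S,I,C,A\}$, and with the confinement to $\Omega$ yields the stated bounds for each component. The one genuinely technical point throughout is the jump term in $\mathcal{L}V$: one must bound $\int_U[-\ln(1-J(u)I(t-))]\,\nu(du)$ uniformly, and this is precisely where $(H)$ is used, through $I(t-)\le\Lambda/\mu$ and $J(u)\le\mu/\Lambda$ (which also makes the post-jump value $S(t-)(1-J(u)I(t-))$ positive); everything else is routine once the confinement step has supplied uniform bounds. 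Positivity of $C$ and $A$, whose equations carry no noise, is automatic in the same framework, since $\dot C\ge-(\omega+\mu)C$ and $\dot A\ge-(\alpha+\mu+d)A$ whenever $I\ge0$.
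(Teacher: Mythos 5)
Your overall scheme is the paper's own: a logarithmic Lyapunov function combined with a stopping-time contradiction for positivity/non-explosion, and the noise-free equation $dN/dt=\Lambda-\mu N(t)-dA(t)$ for confinement and for the asymptotic bounds; your version (a nonnegative $V$, the stopping times $\tau_k$, and an expectation bound for the stopped process, Mao-style) is in fact tidier than the paper's pathwise ``let $t\to\tau$'' argument. One presentational point: you invoke $0\le A(t)\le N(t)$ on all of $[0,\tau_e)$ to confine $N$ before positivity is known; this circularity is harmless provided the comparison is carried out on $[0,\tau_k)$, where all components exceed $1/k$ by the very definition of $\tau_k$. (Also, the coefficients are quadratic, so they are not of linear growth; but only local Lipschitzness is needed for the maximal local solution, so nothing is lost.)

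There is, however, a genuine gap at the one step you yourself single out as the technical crux: the claim that hypothesis $(H)$ makes $-\ln\bigl(1-J(u)I(t-)\bigr)$ bounded does not follow from $0<1-J(u)I<1$. Under $(H)$ one only gets $J(u)I\le(\mu/\Lambda)(\Lambda/\mu)=1$, and $J(u)I$ can be arbitrarily close to $1$ inside $\Omega$ (take $J(u)=\mu/\Lambda$ and $I$ near $\Lambda/\mu$), so the jump contribution $\int_U\bigl[J(u)I-\ln(1-J(u)I)\bigr]\nu(du)$ to $\mathcal{L}V$ is not dominated by a deterministic constant on $\Omega$. On the region visited before $\tau_k$ one only has $I\le \Lambda/\mu-1/k$, hence a bound of order $\nu(U)\ln k$; since your contradiction compares $V(S(0),I(0),C(0),A(0))+KT$ with $\varepsilon\bigl(\ln k+\ln(\Lambda/\mu)\bigr)$, a $K$ growing like $\ln k$ ruins the contradiction as soon as $\nu(U)T\ge\varepsilon$. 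The paper's proof makes the same silent move (it absorbs the state-dependent integrals $\int_U[\log(1-JI)+JI]\nu(du)$ into a quantity $K$ treated as a constant), so you are not deviating from the source, but as written the step does not close; it can be repaired by strengthening $(H)$ to $\sup_{u\in U}J(u)\le\theta\mu/\Lambda$ for some $\theta<1$ (then $1-J(u)I\ge 1-\theta$ and your constant $K$ really exists), or by any other assumption keeping $J(u)I$ uniformly away from $1$.
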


\begin{proof}
Given initial data $(S(0);I(0);C(0);A(0))\in \Omega $,
the local lipschitzianity of the drift and the diffusion 
enable us to confirm existence and uniqueness of a local solution  
$(S(t);I(t);C(t);A(t))$ in $\Omega $ for $t\in [0,\tau_e)$, 
where $\tau_e$ is the explosion time. To prove that such 
solution is global, we define the stopping time
$$
\tau=\left\{t\in  [0,\tau_e) : S(t)\leq 0, I(t)\leq 0, C(t)\leq 0, A(t)\leq 0\right\}.
$$
Assuming that $\tau_e< \infty$, we have $\tau\leq \tau_e$ and 
there exist $T>0$ and $\epsilon>0$ such that $P(\tau\leq T)>\epsilon$.
Let us now consider the following function $V$ on $\mathbb{R}_+^4$:
$V(x,y,z,t)=\log(xyzt)$. Using It\^{o}'s formula, we get:
\begin{multline*}
dV(t,X(t))=LV(t,X(t))dt+\partial_xV(t,X(t)) \cdot b(t,X(t))dW_t\\
+\int_U\left( V(X(t-)+J(u))- V(X(t-))\right)\check{N}(dt,du),
\end{multline*}
where $b$ is the drift coefficient, that is, in abbreviation,
$$
dV=LVdt+\left( \frac{\Lambda}{S}-\beta I+\beta S-\rho-\phi-2\mu+\alpha\frac{A}{I}
+\omega\frac{C}{I}\right)dW_t+\int_U\log(1-JI)(1+JS)\check{N}(dt,du),
$$
where $L$ denotes the differential operator. We have
\begin{multline*}
LV=(\Lambda-\beta IS-\mu S)\frac{1}{S}+(\beta IS-(\rho+\phi+\mu )I+\alpha A
+\omega C)\frac{1}{I}-\frac{\sigma^2I^2}{2}-\frac{\sigma^2S^2}{2}\\
+\int_U [\log(1-JI)(1+JS)-JI+JS] \nu(du)
\end{multline*}
and, noting that from our assumption $(H)$ one has $1-JI>0$, it follows that
$$
LV\geq -\frac{\beta \Lambda}{\mu}-2 \mu -\rho -\phi
- \frac{\sigma^2 \Lambda^2}{\mu^2}+\int_U [\log(1-JI)+JI] \nu(du)+\int_U [\log(1+JS)-JS] \nu(du):=K.
$$
Observe that $x \longmapsto \log(1+x)-x$ and 
$x \longmapsto \log(1-x)+x $ are non-positive functions.
Therefore,
\begin{multline}
\label{eq:last1}
dV\geq K dt+\left( \frac{\Lambda}{S}-\beta I+\beta S-\rho-\phi-2\mu+\alpha\frac{A}{I}
+\omega\frac{C}{I}\right)dW_t\\
+\int_U\log(1-JI)(1+JS)\check{N}(dt,du).
\end{multline}
Integrating \eqref{eq:last1} from $0$ to $t$, we get
\begin{equation*}
\begin{split}
V(S(t),I(t),C(t),A(t)) 
&\geq V(S(0),I(0),C(0),A(0))+K(t)\\
&\quad +\int_0^{t} \int_U\left( \frac{\Lambda}{S}-\beta I+\beta S
-\rho-\phi-2\mu+\alpha\frac{A}{I}+\omega\frac{C}{I}\right)dW_s\\
&\quad +\int_0^{t}\int_U\log(1-JI)(1+JS)\check{N}(ds,du).
\end{split}
\end{equation*}
Because of the continuity of the state variables, some components of 
$(S(\tau),I(\tau),C(\tau),A(\tau))$ are equal to $0$. Thus,  
$\lim_{t \rightarrow \tau}V(\tau)=-\infty$.
Letting $t \rightarrow \tau $, we deduce that
\begin{multline*}
-\infty \geq V(S(0),I(0),C(0),A(0))+K(t)
+\int_0^{t}\int_U\left( \frac{\Lambda}{S}-\beta I+\beta S-\rho-\phi-2\mu
+\alpha\frac{A}{I}+\omega\frac{C}{I}\right)dW_s\\
+\int_0^{t}\int_U\log(1-JI)(1+JS)\check{N}(ds,du)>\infty,
\end{multline*}
which contradicts our assumption. 
It remains to show the boundedness of the solution. 
Summing up the equations from system \eqref{sy1} gives that
$$
\dfrac{dN(t)}{dt}= \Lambda-\mu N(t)-dA(t),
$$
and upper and lower bounds are given by
$$ 
\Lambda-(\mu+d) N(t)\leq \dfrac{dN(t)}{dt}\leq \Lambda-\mu N(t),
$$
where $N(t)=S(t)+I(t)+C(t)+A(t)$. So,
\begin{align*}
e^{\mu t}\dfrac{dN(t)}{dt}
&\leq e^{\mu t}\left( \Lambda-\mu N(t)\right),\\
\int_0^t e^{\mu s}\dfrac{dN(s)}{ds}ds
&\leq \int_0^t e^{\mu s}\left( \Lambda-\mu N(s)\right)ds,\\
e^{\mu t} N(t)
&\leq \dfrac{\Lambda}{\mu}(e^{\mu t}-1)+N(0),\\
N(t)&\leq \dfrac{\Lambda}{\mu}(1-e^{-\mu t})+N(0) e^{-\mu t},
\end{align*}
and 
$$
\limsup_{t\rightarrow \infty} N(t) \leq \dfrac{\Lambda}{\mu} \text{ a.s.}
$$
Adopting the same technique, we also arrive to 
$\liminf_{t\rightarrow \infty} N(t)\geq \dfrac{\Lambda}{\mu+d} \text{ a.s.}$,
which confirms the intended boundedness.
\end{proof}


\section{Extinction}
\label{sec3}

We now provide a sufficient condition for the extinction of $I(t)$.

\begin{theorem}
If 
$$
\frac{\beta^2}{2 \sigma^2}< (\rho+\phi+\mu)+(\alpha+\omega)\frac{\Lambda}{\mu},
$$ 
then $I(t)\rightarrow 0$ a.s. when $t\rightarrow +\infty$.
\end{theorem}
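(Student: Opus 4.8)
The plan is to study the evolution of $\log I(t)$ by applying It\^o's formula for jump-diffusions, and then to identify a deterministic upper bound on $\frac{1}{t}\log I(t)$ whose sign is controlled by the stated hypothesis. First I would write, using the $I$-equation of \eqref{sy1} and It\^o's formula,
\begin{multline*}
d\log I(t) = \Bigl(\beta S(t) - (\rho+\phi+\mu) + \alpha\frac{A(t)}{I(t)} + \omega\frac{C(t)}{I(t)} - \frac{\sigma^2 S(t)^2}{2}\Bigr)dt\\
+ \sigma S(t)\, dW_t + \int_U \bigl[\log\bigl(1+J(u)S(t-)\bigr) - J(u)S(t-)\bigr]\nu(du)\,dt + \int_U \log\bigl(1+J(u)S(t-)\bigr)\check{N}(dt,du).
\end{multline*}
The drift in $S$ is then handled by completing the square: $\beta S - \frac{\sigma^2 S^2}{2} = -\frac{\sigma^2}{2}\bigl(S - \frac{\beta}{\sigma^2}\bigr)^2 + \frac{\beta^2}{2\sigma^2} \le \frac{\beta^2}{2\sigma^2}$, and the jump integrand $\log(1+J S) - JS$ is non-positive (as already noted in the proof of the first theorem), so that term only helps. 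The delicate pieces are the ratios $\alpha A/I$ and $\omega C/I$, which are a priori unbounded; here I would invoke the first theorem, which gives $\liminf_{t\to\infty} I(t) \ge \Lambda/(\mu+d)$ and $\limsup_{t\to\infty}(C(t),A(t)) \le \Lambda/\mu$ a.s., so that for $t$ large one has $\alpha A(t)/I(t) + \omega C(t)/I(t) \le (\alpha+\omega)\frac{\Lambda}{\mu}\cdot\frac{\mu+d}{\Lambda} = (\alpha+\omega)\frac{\mu+d}{\mu}$ — I would need to check the exact constant the authors intend against the state-space bounds, but in any case these ratios are eventually bounded.

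Next I would integrate from $0$ to $t$, divide by $t$, and pass to the limit. The Brownian term $\frac{\sigma}{t}\int_0^t S(s)\,dW_s$ tends to $0$ a.s. by the strong law of large numbers for local martingales (its quadratic variation is $\sigma^2\int_0^t S(s)^2\,ds \le \sigma^2 (\Lambda/\mu)^2 t$, which grows linearly). Similarly, the compensated Poisson integral $\frac{1}{t}\int_0^t\int_U \log(1+JS)\check{N}(ds,du)$ is a local martingale with predictable quadratic variation bounded by a linear function of $t$ (since $\log(1+JS)$ is bounded by hypothesis $(H)$ and the state bounds, and $\nu(U)<\infty$), so it too vanishes a.s. — this is the standard Kunita/Lepingle strong law argument for jump martingales. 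Collecting terms,
$$
\limsup_{t\to\infty}\frac{\log I(t)}{t} \le \frac{\beta^2}{2\sigma^2} - (\rho+\phi+\mu) + (\alpha+\omega)\frac{\Lambda}{\mu}\cdot\frac{\mu+d}{\Lambda} \;\;(\text{or the constant as stated}),
$$
which is strictly negative precisely under the hypothesis $\frac{\beta^2}{2\sigma^2} < (\rho+\phi+\mu) + (\alpha+\omega)\frac{\Lambda}{\mu}$, and therefore $I(t)\to 0$ a.s. exponentially fast.

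I expect the main obstacle to be the rigorous control of the ratio terms $\alpha A/I + \omega C/I$ together with matching the constant $(\alpha+\omega)\frac{\Lambda}{\mu}$ in the hypothesis. One has to be careful that the bounds from the first theorem are only asymptotic ($\liminf$/$\limsup$), so strictly one should argue that for every $\varepsilon>0$ there is a random time after which $I \ge \Lambda/(\mu+d) - \varepsilon$ and $C, A \le \Lambda/\mu + \varepsilon$, push these through, and then let $\varepsilon\to 0$; since the state space $\Omega$ is in fact invariant by the first theorem, one can alternatively use the pointwise bounds valid for all $t$ on $\Omega$ directly, which is cleaner. A secondary technical point is verifying the integrability hypotheses needed to apply the strong law of large numbers to the jump martingale, but hypothesis $(H)$ (boundedness of $J$) and the boundedness of $S$ on $\Omega$ make the integrands bounded, so $\nu(U)<\infty$ closes this immediately.
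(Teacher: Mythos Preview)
Your overall strategy coincides with the paper's: apply It\^o's formula to $\log I(t)$, complete the square in $S$ to obtain $\beta S-\tfrac{\sigma^{2}S^{2}}{2}\le\tfrac{\beta^{2}}{2\sigma^{2}}$, discard the non-positive jump compensator $\int_U[\log(1+JS)-JS]\,\nu(du)$, and kill the Brownian and compensated-Poisson integrals via the strong law of large numbers for local martingales. On all of these points your write-up is actually more careful than the paper's.

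The genuine gap is exactly where you flag it, but your proposed fix does not work. You suggest bounding $\alpha A/I+\omega C/I$ using $\liminf_{t\to\infty}I(t)\ge\Lambda/(\mu+d)$ from the first theorem. Two problems arise. First, the invariance of $\Omega$ and the proof of that theorem only yield bounds on the \emph{total} $N=S+I+C+A$; the individual lower bounds listed in its statement are not actually established there (only the componentwise upper bounds follow, via $S,I,C,A\le N$). Second, and decisively, any uniform positive lower bound on $I$ is incompatible with the very conclusion $I(t)\to 0$ that you are trying to prove, so invoking it is circular. The paper does not resolve this either: it simply writes $\alpha A/I+\omega C/I\le(\alpha+\omega)\Lambda/\mu$ without justification. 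A rigorous treatment would have to control the ratios $A/I$ and $C/I$ by other means---for instance through the linear ODEs $dC/dt=\phi I-(\omega+\mu)C$ and $dA/dt=\rho I-(\alpha+\mu+d)A$, which heuristically pin $C/I$ near $\phi/(\omega+\mu)$ and $A/I$ near $\rho/(\alpha+\mu+d)$---but neither your proposal nor the paper supplies such an argument.
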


\begin{proof}
Let $V(I)=\log(I)$. Using It\^{o}'s formula corresponding to the Poissonian process, we get
\begin{align*}
dV(t,X(t)) & = LV(t,X(t))dt+\partial_xV(t,X(t))\cdot \sigma I(t)S(t) dW_t\\
&\quad +\int_U\left( V(X(t-)+J(u))- V(X(t-))\right)\check{N}(dt,du)\\
& =  LV(t,X(t))dt+\sigma S(t) dW_t+\int_U\left(\log(1+JS)\right)\check{N}(dt,du),
\end{align*}
where
$$
LV=\left(\beta IS-(\rho+\phi+\mu)I+\alpha A+\omega C\right)
\frac{1}{I} -\frac{\sigma^2S^2}{2}+\int_U \log(1+JS)-JS] \nu(du),
$$
which implies that
$$
LV \leq \frac{\beta^2}{2 \sigma^2}-(\rho+\phi+\mu)
+(\alpha+\omega)\frac{\Lambda}{\mu}
$$
and
$$
dV \leq \left(\frac{\beta^2}{2 \sigma^2}-(\rho+\phi+\mu)
+(\alpha+\omega)\frac{\Lambda}{\mu}\right)dt+\sigma S(t)dW_t
+\int_U\log(1+JS)\check{N}(dt,du).
$$
Integrating from $0$ to $t$ and dividing by $t$ on both sides, we have
\begin{align*}
\dfrac{\log(V(t)}{t}
&\leq \dfrac{\log(I_0)}{t}+\dfrac{1}{t}
\int_0^t \left(\frac{\beta^2}{2 \sigma^2}-(\rho+\phi+\mu)
+(\alpha+\omega)\frac{\Lambda}{\mu}\right)ds\\
&\quad +\dfrac{1}{t}\int_0^t \sigma S(s)dB(s)
+\dfrac{1}{t}\int_0^t\int_U\log(1+JS)\check{N}(ds,du)\\
&\leq \dfrac{\log(I_0)}{t}+\frac{\beta^2}{2 \sigma^2}-(\rho+\phi+\mu)
+(\alpha+\omega)\frac{\Lambda}{\mu}+\dfrac{\sigma}{t}\int_0^t S(s)dB(s)\\
&\quad +\dfrac{1}{t}\int_0^t\int_U\log(1+JS)\check{N}(ds,du).
\end{align*}
Put
$$
M_t=\int_0^t \sigma S(s)dB_s,
$$
so that
\begin{align*}
\limsup_{t\rightarrow+\infty} \dfrac{<M_t,M_t>}{t}
=\limsup_{t\rightarrow+\infty} \dfrac{\sigma^2}{t}
\int_0^t u^2(s)ds
\leq \sigma^2\left( \dfrac{\Lambda}{\mu}\right)^2<\infty.
\end{align*}
Then, by using the strong law of large numbers theorem for martingales \cite{R7}, 
and the fact that the solution of the principal system is bounded, we get
\begin{align*}
\limsup_{t\rightarrow+\infty}\dfrac{M_t}{t}=0.
\end{align*}
Therefore,
\begin{equation*}
\limsup_{t\rightarrow+\infty}\dfrac{\log(I(t)}{t}
\leq  \frac{\beta^2}{2 \sigma^2}-(\rho+\phi+\mu)
+(\alpha+\omega)\frac{\Lambda}{\mu}.
\end{equation*}
Therefore, if $ \frac{\beta^2}{2 \sigma^2}
< (\rho+\phi+\mu)+(\alpha+\omega)\frac{\Lambda}{\mu}$, 
then $I(t)\rightarrow 0$ a.s. when $t\rightarrow +\infty$. 
\end{proof}


\section{Persistence in the mean}
\label{sec4}

In this section, we shall investigate the persistence property of 
$S(t)$, $I(t)$, $C(t)$, and $A(t)$ in the mean. This means that 
$$
\liminf_{t\rightarrow \infty}\dfrac{1}{t}\int_0^t x(s)ds>0,
$$
where $x(t)\in \{S(t),I(t),C(t),A(t)\}$.
For convenience, we define the following notation:
$$
<x(t)>:=\dfrac{1}{t}\int_0^t x(s)ds.
$$

\begin{theorem}
Let $(S(t), I(t),C(t),A(t))$ be a solution of system \eqref{sy1} 
with an arbitrary initial value $(S(0), I(0),C(0),A(0))$ 
in $\Omega$. If 
\begin{equation}
\beta \frac{\Lambda}{\mu+d}>(\rho+\phi+\mu)
+\frac{\sigma^2 \Lambda^2}{2 \mu^2},
\end{equation}
then
\begin{equation*}
\liminf_{t\rightarrow \infty}<I(t)> \  
\geq \frac{1}{(\rho+\phi+\mu)}\left(  \frac{\beta \Lambda}{\mu+d}
-(\rho+\phi+\mu)-\frac{\sigma^2 \Lambda^2}{2 \mu^2}\right) >0
\end{equation*}
and
\begin{equation*}
\liminf_{t\rightarrow \infty} <S(t)> \frac{\Lambda\mu}{\Lambda\beta + \mu^2} >0.
\end{equation*}
\end{theorem}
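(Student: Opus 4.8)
The plan is to prove the two lower bounds separately, in each case by the route that is standard for persistence in the mean: integrate an equation derived from \eqref{sy1}, divide by $t$, and let $t\to\infty$, using that the solution stays in $\Omega$ by the global existence result of Section~\ref{sec2} and that the $t^{-1}$-normalised local-martingale terms vanish $a.s.$ by the strong law of large numbers for martingales.

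I would first handle $<S(t)>$, which is the cleaner of the two. Integrating the $S$-equation of \eqref{sy1} from $0$ to $t$ and dividing by $t$ gives
$$
\frac{S(t)-S(0)}{t}=\Lambda-\beta<I(t)S(t)>-\mu<S(t)>-\frac{\sigma}{t}\int_0^tI(s)S(s)\,dW_s-\frac1t\int_0^t\int_UJI(s-)S(s-)\,\check N(ds,du).
$$
On $\Omega$ one has $0\le I(s)S(s)\le(\Lambda/\mu)^2$, so the two stochastic integrals are martingales whose predictable quadratic variations grow at most linearly in $t$; hence their $t^{-1}$-normalisations tend to $0$ $a.s.$, and $t^{-1}\bigl(S(t)-S(0)\bigr)\to0$ since $S$ is bounded. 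Writing $\varepsilon_t$ for the sum of these three vanishing quantities and using $I(s)\le\Lambda/\mu$ to estimate $<I(t)S(t)> \le(\Lambda/\mu)<S(t)>$, the identity above becomes $\bigl(\mu+\beta\Lambda/\mu\bigr)<S(t)> \ge\Lambda-\varepsilon_t$, and taking $\liminf$ as $t\to\infty$ gives $\liminf_{t\to\infty}<S(t)> \ge\Lambda\big/\bigl(\mu+\beta\Lambda/\mu\bigr)=\Lambda\mu/(\Lambda\beta+\mu^2)>0$, which is the second assertion.

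For $<I(t)>$ I would apply It\^o's formula to $V=\log I$, as in the proof of the extinction theorem, obtaining $d\log I=LV\,dt+\sigma S\,dW_t+\int_U\log(1+JS)\,\check N(dt,du)$ with
$$
LV=\beta S-(\rho+\phi+\mu)+\frac{\alpha A+\omega C}{I}-\frac{\sigma^2S^2}{2}+\int_U[\log(1+JS)-JS]\,\nu(du),
$$
but now I would bound $LV$ from below rather than above. On $\Omega$ one has $\Lambda/(\mu+d)\le S\le\Lambda/\mu$, whence $\beta S-\tfrac12\sigma^2S^2\ge\beta\Lambda/(\mu+d)-\sigma^2\Lambda^2/(2\mu^2)$; the cross-term $(\alpha A+\omega C)/I$ is nonnegative and may simply be dropped; and, since $(H)$ forces $0<JS\le1$ and hence $\log(1+JS)-JS\ge\log2-1$, the integral $\int_U[\log(1+JS)-JS]\,\nu(du)$ is bounded below by the finite constant $(\log2-1)\nu(U)$. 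Integrating $d\log I$ over $[0,t]$, dividing by $t$, and invoking the martingale strong law for $\sigma\int_0^tS\,dW_s$ and for the compensated Poisson integral (both have linearly growing quadratic variation on $\Omega$), I would arrive at an inequality of the shape $t^{-1}\bigl(\log I(t)-\log I(0)\bigr)\ge\beta<S(t)>-(\rho+\phi+\mu)-\sigma^2\Lambda^2/(2\mu^2)+\mathrm{const}+o(1)$ $a.s.$ To make $<I(t)>$ appear on the right, I would substitute the relation $\mu<S(t)> =\Lambda-\varepsilon_t-\beta<I(t)S(t)>$ coming from the $S$-equation, together with $<I(t)S(t)> \le(\Lambda/\mu)<I(t)>$, so that $<S(t)>$ is bounded below by an affine decreasing function of $<I(t)>$; then, using $\limsup_{t\to\infty}t^{-1}\log I(t)\le0$ (a consequence of $I(t)\le\Lambda/\mu$), taking $\liminf$ and rearranging should produce a strictly positive lower bound for $\liminf_{t\to\infty}<I(t)>$ of exactly the advertised form, with positivity equivalent to the hypothesis $\beta\Lambda/(\mu+d)>(\rho+\phi+\mu)+\sigma^2\Lambda^2/(2\mu^2)$.

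The step I expect to be the main obstacle is precisely this last substitution. The It\^o derivative of $\log I$ naturally carries the average $<S(t)>$ rather than $<I(t)>$, so the lower bound for $<I(t)>$ only emerges after one inserts the $<S(t)>$--$<I(t)>$ relation derived from the $S$-equation, and it is a delicate bookkeeping matter to arrange that the resulting constants collapse to the clean values $\beta\Lambda/(\mu+d)$ and $(\rho+\phi+\mu)$; one has to be careful about which of the bounds of Section~\ref{sec2} is used and in which direction. A secondary point is the jump--compensator integral appearing in $LV$: unlike the nonnegative cross-term $(\alpha A+\omega C)/I$, which is harmless for a lower estimate, this integral is nonpositive and must therefore be bounded from below, which is exactly where hypothesis $(H)$ and the finiteness of $\nu$ are needed. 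The remaining ingredients --- the linear growth of the quadratic variations, the elementary inequalities $\log(1+x)-x\le0$ and $\log(1+x)\ge0$ for $x\ge0$, and the interchange of $\liminf$/$\limsup$ with these inequalities --- are routine.
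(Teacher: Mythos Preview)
Your treatment of $<S(t)>$ is exactly the paper's. For $<I(t)>$, however, your plan diverges from the paper at precisely the step you flag as the main obstacle, and the substitution you propose does not produce the stated constants. If you keep $\beta<S>$ in the lower bound for $t^{-1}(\log I(t)-\log I(0))$ and then feed in the $S$-equation relation $\mu<S>\approx\Lambda-\beta<IS>\ge\Lambda-(\beta\Lambda/\mu)<I>$, the coefficient that appears in front of $<I>$ is $\beta^{2}\Lambda/\mu^{2}$ and the leading constant is $\beta\Lambda/\mu$, not $(\rho+\phi+\mu)$ and $\beta\Lambda/(\mu+d)$. You would obtain \emph{a} positive lower bound for $\liminf<I>$ under a related hypothesis, but not the one stated; no bookkeeping will collapse those constants to the advertised values. (There is also a small inconsistency in your outline: you first bound $\beta S\ge\beta\Lambda/(\mu+d)$ pointwise and then write $\beta<S(t)>$ as if it were still unestimated.)

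The paper's device avoids the $S$-equation entirely for this part. It integrates the \emph{$I$-equation} itself from $0$ to $t$, divides by $t$, and discards the nonnegative terms $\beta IS$, $\alpha A$, $\omega C$ to obtain
\[
\frac{I(t)-I(0)}{t}\ \ge\ -(\rho+\phi+\mu)\,<I(t)>\ +\ (\text{martingale terms}).
\]
It then \emph{adds} this inequality to the It\^o lower bound for $t^{-1}(\log I(t)-\log I(0))$, in which $\beta S\ge\beta\Lambda/(\mu+d)$ is used directly. The summed left-hand side has $\limsup\le 0$ (boundedness of $I$), while the right-hand side is $-(\rho+\phi+\mu)<I(t)>+\bigl(\beta\Lambda/(\mu+d)-(\rho+\phi+\mu)-\sigma^{2}\Lambda^{2}/(2\mu^{2})\bigr)+o(1)$, which rearranges to the claimed estimate. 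So the coefficient $(\rho+\phi+\mu)$ in front of $<I>$ comes from the linear $I$-equation, not from the $S$-equation.

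Your secondary worry about the jump compensator $\int_U[\log(1+JS)-JS]\,\nu(du)$ is well placed: it is indeed nonpositive and works against a lower bound. The paper, however, simply drops it without comment, so to reproduce the stated constants you would do the same rather than carry the $(\log 2-1)\nu(U)$ correction you propose.
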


\begin{proof}
Using the first equation of system \eqref{sy1}, we have
\begin{align*}
\dfrac{S(t)-S(0)}{t} 
&=\frac{1}{t}\int_0^t(\Lambda-\beta IS-\mu S)ds
-\frac{1}{t}\int_0^t \sigma ISdW_s
-\frac{1}{t}\int_0^t \int_U J(u)IS \check{N}(ds,du)\\
&\geq \frac{1}{t}\int_0^t(\Lambda-\beta \frac{\Lambda}{\mu} S-\mu S)ds
-\frac{1}{t}\int_0^t \sigma ISdW_s-\frac{1}{t}\int_0^t \int_U J(u)IS \check{N}(ds,du)
\end{align*}
and
$$
\left(\frac{\Lambda\beta}{\mu}+\mu\right)<S> \ 
\geq \Lambda - \dfrac{S(t)-S(0)}{t}-\frac{1}{t}\int_0^t \sigma ISdW_s
-\frac{1}{t}\int_0^t \int_U J(u)IS \check{N}(ds,du).
$$
Using the strong law of large numbers for martingales 
and the boundedness of solution, we get
$$ 
\liminf_{t\rightarrow \infty} <S(t)> \ 
\geq \frac{\Lambda\mu}{\Lambda\beta + \mu^2}>0.
$$
Integrating the second equation of system \eqref{sy1} 
from $0$ to $t$, and dividing both sides by $t$, we obtain
\begin{align*}
\dfrac{I(t)-I(0)}{t} 
&=\frac{1}{t}\int_0^t(\left( \beta I(s)S(s)-(\rho+\phi+\mu)I(s)
+\alpha A(s)+\omega C(s)\right)ds\\
&\quad +\frac{1}{t}\int_0^t \sigma ISdW_s+
\frac{1}{t}\int_0^t \int_U J(u)IS \check{N}(ds,du)\\
& \geq -(\rho+\phi+\mu)<I(t)>+\frac{1}{t}\int_0^t \sigma ISdW_s
+\frac{1}{t}\int_0^t \int_U J(u)IS \check{N}(ds,du).
\end{align*}
Using again It\^{o}'s formula on function $V$ with 
$V(I)=\log(I)$, we get 
\begin{align*}
dV &=(\beta IS-(\rho+\phi+\mu)I+\alpha A+\omega C)\frac{1}{I}-\frac{\sigma^2S^2}{2}\\
&\quad +\int_U \log(1+JS)-JS] \nu(du))dt+\sigma S(t)dW_t+\int_U\log(1+JS)\check{N}(dt,du)\\
&\geq \left(\beta \frac{\Lambda}{\mu+d}
-(\rho+\phi+\mu)+\alpha \frac{\mu}{\mu+d}
+\omega \frac{\mu}{\mu+d}\right)
-\frac{\sigma^2S^2}{2}\\
&\quad +\int_U \log(1+JS)-JS] \nu(du))dt
+\sigma S(t)dW_t+\int_U\log(1+JS)\check{N}(dt,du)
\end{align*}
and
\begin{align*}
\frac{\log I(t)-\log I(0)}{t} 
& \geq \left(\beta \frac{\Lambda}{\mu+d}-(\rho+\phi+\mu)
+\alpha \frac{\mu}{\mu+d}+\omega \frac{\mu}{\mu+d}\right)
-\frac{\sigma^2 \Lambda^2}{2 \mu^2}\\
&\quad +\frac{1}{t} \int_U \log(1+JS)-JS] \nu(du))ds
+\frac{1}{t} \int_0^t \sigma S(s)dW_s\\
&\quad +\frac{1}{t} \int_0^t \int_U\log(1+JS)\check{N}(ds,du).
\end{align*}
By summing $\dfrac{I(t)-I(0)}{t}$ and $\dfrac{\log I(t)-\log I(0)}{t}$, 
applying the strong law of large numbers for martingales, 
and using the positivity and boundedness of the solution, we get
\begin{align*}
\liminf_{t\rightarrow \infty}<I(t)> 
& \geq \frac{1}{(\rho+\phi+\mu)}\left(  
\frac{\beta \Lambda}{\mu+d}-(\rho+\phi+\mu)
-\frac{\sigma^2 \Lambda^2}{2 \mu^2}\right) > 0.
\end{align*}
Consequently, the intended persistence 
in mean of $I(t)$ holds.
\end{proof}


\section{Numerical results}
\label{sec5}

This section is devoted to illustrate our  mathematical findings 
through numerical simulations. In the following examples, we apply the 
algorithm presented in \cite{Zou2014} to solve system  
\eqref{sy1} and we take the parameter values as given in Table~\ref{tabl1}.
\begin{table}[h!]
\caption{Parameters values used in the numerical simulations.} 
\label{tabl1}
\centering
\begin{tabular}{|c|c|c|} \hline \hline 
Parameters & Fig.~\ref{fig1} & Fig.~\ref{fig2} \\ \hline \hline 
$\Lambda$  & $10$     & $100$\\ 
$\mu$	   & $0.0125$ & $0.0013$\\ 
$\beta$    & $0.0001$ & $0.1$ \\
$\phi$     & $1$      & $1$\\
$\rho$     & $0.1$    & $0.1$\\
$\alpha$   & $0.33$   & $0.33$\\
$\omega$   & $0.09$   & $0.09$\\
$d$        & $1$      & $1$ \\ \hline
\end{tabular}
\end{table}
\begin{figure}[!t]
\includegraphics[width=14cm,height=7cm]{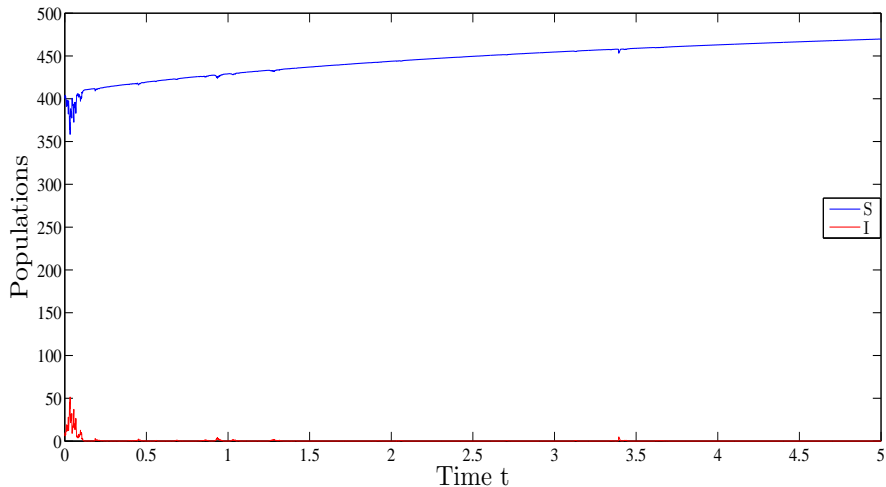} 
\caption{Susceptible and infected populations 
as functions of time in a situation of disease extinction.}
\label{fig1}
\end{figure}

Figure~\ref{fig1} shows the dynamics of susceptible 
and infected classes during the period of observation 
for the case of disease extinction. From Figure~\ref{fig1}, 
we clearly observe that the curve representing  
the infected population converges to $0$. It is worth 
to notice that, in this case, the susceptible individuals 
increase to reach their maximum, which means that the disease dies out. 
This is consistent with our theoretical findings of Section~\ref{sec3}
concerning extinction in the SICA model.

\begin{figure}[!t]
\includegraphics[width=14cm,height=7cm]{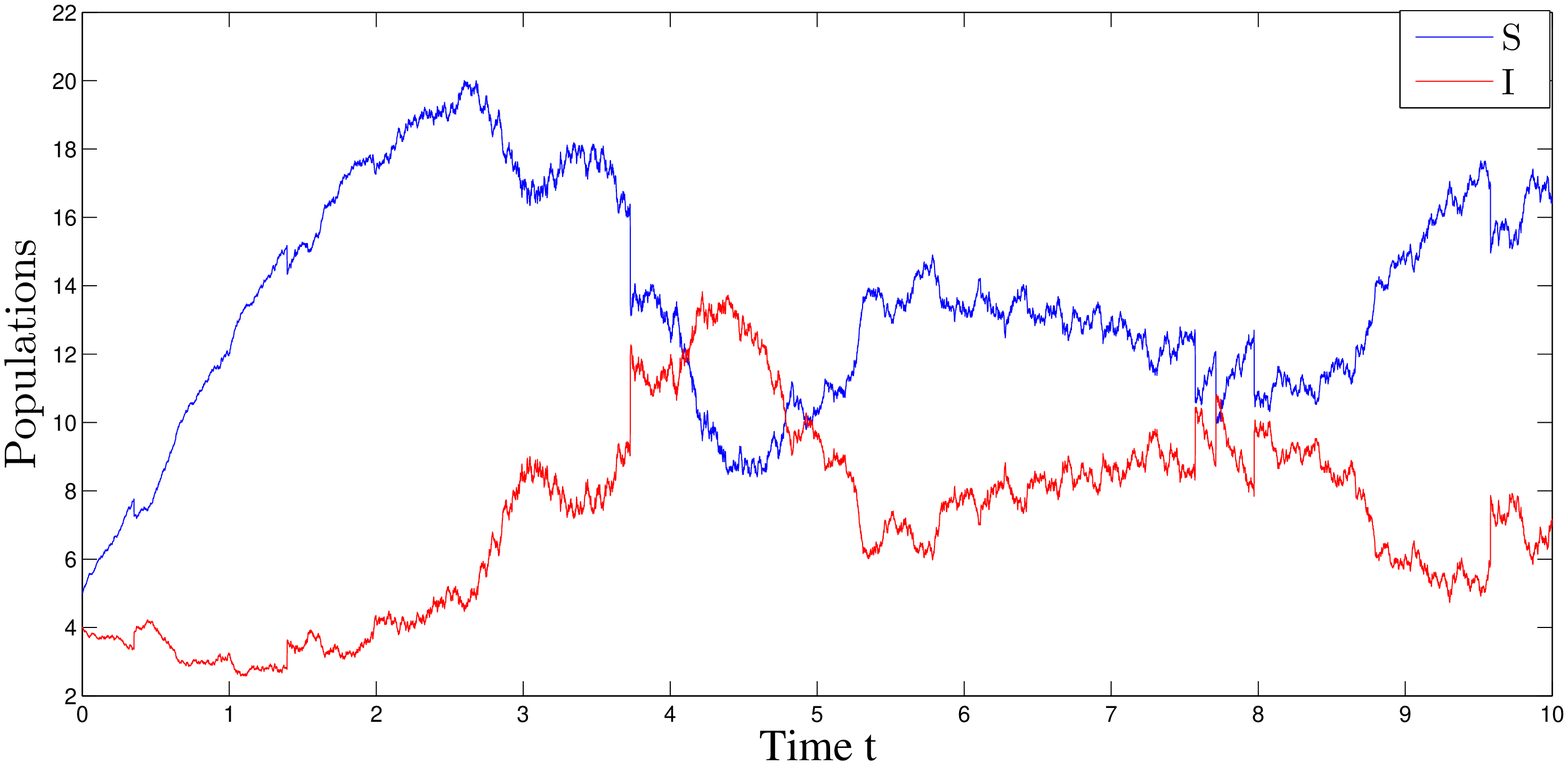} 
\caption{Susceptible and infected populations as functions 
of time in a situation of disease persistence.}
\label{fig2}
\end{figure}

The evolution of both susceptible and infected populations, 
as predicted by our L\'evy jumps model \eqref{sy1}, is 
illustrated in Fig.~\ref{fig2} in the case of 
disease persistence. We note that in this epidemic situation, 
all the four  SICA compartments, i.e., the susceptible, 
the infected, the HIV-infected individuals under ART treatment 
(the so called chronic stage) with a viral load remaining low, 
and the HIV-infected individuals with AIDS clinical symptoms, 
persist. This is consistent with our theoretical findings 
of Section~\ref{sec4} concerning persistence.


\section{Conclusion}
\label{sec6}

In this work, we have considered and extended the  
epidemic SICA model of Silva and Torres \cite{Silva16} 
to a new stochastic model driven by both white noise 
and L\'{e}vy noise. This allows to better describe 
the sudden social fluctuations. The new SICA model 
was studied theoretically and some numerical simulations 
were also performed, which not only support the proved mathematical 
results but also illustrate the asymptotic behaviour of the solution. 
Firstly, with the help of Lyapunov's analysis method,
we have proved existence and uniqueness of a solution.  
Secondly, we have demonstrated that the model is well-posed, both
mathematically and biologically, by establishing the boundedness 
of the solution, that is, $\limsup_{t\rightarrow \infty} N(t) 
\leq \dfrac{\Lambda}{\mu}$ a.s. and
$\liminf_{t\rightarrow \infty} N(t)\geq \dfrac{\Lambda}{\mu+d}$ a.s.,
as well as the positivity of the solution.
Thirdly, we have obtained an appropriate sufficient condition 
for extinction, showing that with an effective threshold of an eventual 
big magnitude of the volatility $\sigma$, 
$\frac{\beta^2}{2 \sigma^2}< (\rho+\phi+\mu)+(\alpha+\omega)\frac{\Lambda}{\mu}$, 
the eradication of the disease occurs. Fourthly, a novel and significant sufficient condition
$$
\liminf_{t\rightarrow \infty}<I(t)>  \ 
\geq \frac{1}{(\rho+\phi+\mu)}\left( \frac{\beta \Lambda}{\mu+d}
-(\rho+\phi+\mu)-\frac{\sigma^2 \Lambda^2}{2 \mu^2}\right) >0
$$ 
and  
$$
\liminf_{t\rightarrow \infty} <S(t)> \frac{\Lambda\mu}{\Lambda\beta + \mu^2} >0
$$
for persistence is obtained, which means that with an adopted 
small magnitude of volatility $\sigma$, the model is persistent in mean.
Lastly, some numerical simulations were implemented that confirm and illustrate 
our mathematical results, give some supplementary insights and eventually helps 
a decision maker to select a good strategy to control the disease by means of 
the increasing or decreasing of the intensity of volatility
or by taking into account and influencing the L\'{e}vy noise 
on the evolution of the variables of the system.


\section*{Acknowledgements}

H.Z. and D.F.M.T. were supported by FCT within project UIDB/04106/2020 (CIDMA).



\end{document}